\theoremstyle{thmstyleone}%
\newtheorem{theorem}{Theorem}
\newtheorem{proposition}[theorem]{Proposition}%
\theoremstyle{thmstyletwo}%
\newtheorem{example}{Example}%
\newtheorem{remark}{Remark}%
\theoremstyle{thmstylethree}%
\newtheorem{definition}{Definition}
\newtheorem{lemma}{Lemma}
\newtheorem{corollary}{Corollary}
 \numberwithin{equation}{section}
\begin{document}

\title[]{Novel Perturbed b-Metric and Perturbed Extended b-Metric Spaces with Banach-Type Fixed Point Theorem}

\author[]{\fnm{Abdelhamid} \sur{Moussaoui}}\email{a.moussaoui@usms.ma}




\affil[]{\orgdiv{Laboratory of Applied Mathematics $\&$ Scientific Computing}, \orgname{Faculty of Sciences and Technics, Sultan Moulay Slimane University, \city{Beni Mellal},, \country{Morocco}}}

\abstract{In this paper, we introduce a new general framework, called \emph{perturbed 
extended $b$-metric spaces}, denoted by $(X,\mathcal{D}_{\zeta},\hbar)$, which 
extends the classical and extended $b$-metric structures through the inclusion 
of an explicit perturbation mapping $\hbar$. This formulation is motivated by 
the observation that distance measurements in many analytical and applied 
contexts are often affected by intrinsic or external deviations that cannot 
be captured by the usual metric-type geometries. We also identify a 
meaningful specialization arising when the control function $\zeta$ is 
constant, leading to the notion of a \emph{perturbed $b$-metric space}, 
introduced here as a natural restriction of the general framework.

We establish several fundamental properties of spaces of the form 
$(X,\mathcal{D}_{\zeta},\hbar)$ and develop a Banach-type fixed point theorem 
in the perturbed extended $b$-metric setting. Conditions ensuring the 
existence and uniqueness of fixed points of a self-map $T:X\to X$ are 
derived, together with convergence of Picard iterations $T^{n}v \to \vartheta$. 
Illustrative examples are provided to show that the presence of the 
perturbation term $\hbar$, together with the influence of the control function 
$\zeta$, may cause $\mathcal{D}_{\zeta}$ to lose the usual extended $b$-metric 
behaviour and prevent $\mathcal{D}_{\zeta}$ from satisfying the standard 
extended $b$-metric axioms. We further discuss natural extensions of this perturbation philosophy to 
multi-point settings, encompassing $S$-metric spaces and their extended and $S_b$-metric variants.
The results presented here open new directions for the study of contractive 
operators and fixed point theory in generalized metric environments.
}


\keywords{Perturbed b-metric, Perturbed extended b-metric, Perturbation function, exact b-metric, Perturbed  extended $S_b$-metric,  Fixed point theory, Contraction.}


\pacs[MSC Classification]{54E50, 47H09, 47H10.}

\maketitle
\section{Introduction and Preliminaries}
Fixed point theory stands at the heart of nonlinear analysis, offering a unified framework for studying nonlinear operators arising in functional equations, approximation methods, optimization, and dynamical systems. Banach’s contraction principle remains its classical foundation, and its clarity and flexibility have motivated numerous extensions to broader abstract settings. A complementary research direction addresses more flexible contractive conditions, using tools such as simulation functions \cite{khoja,simulation2,ext,control}, altering distances \cite{altering}, and control functions \cite{function1,function2}. This line of work also includes contractive schemes for cyclic, coupled, weak, or integral settings \cite{cyclic1,cyclic2,coupled}, together with techniques based on binary relations and graph frameworks \cite{relation1,relation2,relation3,relation4,relation5}. Studies on non-self mappings \cite{best1} have likewise expanded fixed point applications.

A natural direction in the evolution of fixed point theory is the modification of the underlying distance structure. Classical metric spaces often lack the flexibility required to describe phenomena influenced by uncertainty, irregularity, or non-standard interactions. To address these limitations, various generalized settings have been proposed, including fuzzy metric spaces \cite{kramosil,gv,surveyfuzzy}, intuitionistic fuzzy metric spaces \cite{intuimetric}, $b$-metric spaces \cite{bmetric1,bmetric2}, partial metric spaces \cite{partialmetric}, $G$-metric spaces \cite{Gmetric}, $\theta$-parametric metrics \cite{meparametric}, modular metrics \cite{modularmetric}, and probabilistic metrics \cite{probametric}. These frameworks relax classical axioms to capture behaviors that the standard metric cannot. 
Recent progress in this area was initiated by Jleli and Samet \cite{jleli}, who introduced perturbed metric spaces and established an extension of Banach’s principle, while Dobritoiu \cite{Dobri} applied this framework to obtain a uniqueness result for a nonlinear Fredholm integral equation. Further developments have also appeared, in \cite{Tijani}, the authors developed the notion of $C^{*}$-algebra-valued perturbed metric spaces, providing a broader framework that encompasses classical $C^{*}$-algebra-valued metric spaces. Păcurar et al. \cite{Pacurar} introduced the concept of mappings contracting perimeters of triangles in perturbed metric spaces.

At a broader level, perturbation concepts have gained attention as a means of incorporating external variations or intrinsic irregularities directly into the distance function itself. Introducing perturbations into the axioms of a metric structure allows the resulting space to reflect fluctuations, imperfections, or structural deviations that may arise in abstract or applied settings. This yields a richer class of generalized distance frameworks capable of capturing behaviors that classical metrics do not represent, thereby offering promising directions for studying convergence, stability, and iterative schemes in environments affected by noise or structural variability.

The notion of a $b$-metric (or quasimetric) space arose from early attempts to generalize metric structures and develop more flexible analytical frameworks. While Bakhtin \cite{bmetric1} and later Czerwik \cite{bmetric2} played key roles in formalizing and promoting this concept within fixed point theory, Berinde and Păcurar \cite{vasile} highlighted that b-metric spaces are considerably older than is commonly acknowledged in recent literature. He also noted that the first fixed point theorem in this setting, the contraction principle for quasimetric spaces, appears to be due to Vulpe et al. \cite{Vulpe}. Working in b-metric spaces is topologically delicate, and the sustained effort over the past fifteen years to establish general fixed point results in this framework is therefore well justified. For concise overviews of the foundational developments in fixed point theory on $b$-metric spaces, we refer the reader to the surveys \cite{vasile,bmetricsurvey}.

\begin{definition}Let $X$ be a nonempty set and let $s \ge 1$ be a fixed real number. 
A mapping $d_{s} : X \times X \to [0,+\infty)$ is called a \emph{$b$-metric} 
if, for all $v,\omega,z \in X$, the following conditions hold:

\begin{description}
    \item[$(d_{s}1)$:] $d_{s}(v,\omega)=0$ if and only if $v=\omega$;
    \item[$(d_{s}2)$:] $d_{s}(v,\omega)=d_{s}(\omega,v)$;
    \item[$(d_{s}3)$:] $d_{s}(v,z) \le s\,[\,d_{s}(v,\omega)+d_{s}(\omega,z)\,]$.
\end{description}
In this case, the pair $(X,d_{s})$ is called a \emph{$b$-metric space}.
\end{definition}
\vspace{0.2cm}
\begin{remark}
For $s = 1$, the $b$-metric becomes a standard metric, and convergence, Cauchy sequences, and completeness are understood exactly as in metric spaces.
\end{remark}
\vspace{0.2cm}
\begin{remark}
It is worth noting that a $b$-metric does not necessarily define a continuous 
distance functional. As a consequence, an extended $b$-metric need not be 
continuous either. The following example illustrates this fact.
\end{remark}
\vspace{0.2cm}
\begin{example}\cite{Nawab}
Let $X=\mathbb{N}\,\cup\,\{+\infty\},$
and define $d_{s} : X\times X \to \mathbb{R}$ by
\[
d_{s}(i,j)=
\begin{cases}
0, & i=j,\\[2mm]
\bigl|\tfrac{1}{i}-\tfrac{1}{j}\bigr|, & i,j \text{ even or } ij=+\infty,\\[2mm]
5, & i,j \text{ odd and } i\neq j,\\[2mm]
2, & \text{otherwise}.
\end{cases}
\]
One verifies that $(X,d_{s})$ forms a $b$-metric space with coefficient $s=3$, 
yet the mapping $d_{s}$ fails to be continuous.
\end{example}
\vspace{0.2cm}

Kamran et al.~\cite{kamran} introduced a broader class of generalized metric spaces, 
referred to as \emph{extended $b$-metric spaces}. The idea is to allow the controlling 
coefficient in the $b$-metric inequality to vary with the points involved, thus 
providing a more flexible structure.
\begin{definition}
Let $X$ be a nonempty set and let $\zeta : X \times X \to [1,+\infty)$ be a given function.  
A mapping $d_{\zeta} : X \times X \to [0,+\infty)$ is called an \emph{extended $b$-metric} 
if, for all $v,\omega,z \in X$, the following conditions hold:
\begin{description}
    \item[$(d_{\zeta}1)$:] $d_{\zeta}(v,\omega)=0$ if and only if $v=\omega$;
    \item[$(d_{\zeta}2$):] $d_{\zeta}(v,\omega)=d_{\zeta}(\omega,v)$;
    \item[$(d_{\zeta}3$):] $d_{\zeta}(v,z) \le \zeta(v,z)\,\big(d_{\zeta}(v,\omega)+d_{\zeta}(\omega,z)\big)$.
\end{description}
In this case, the pair $(X,d_{\zeta})$ is called an \emph{extended $b$-metric space}.
\end{definition}
\vspace{0.2cm}
\begin{remark}
If one slightly adjusts the above definition by allowing the control function 
to depend on three variables instead of two, that is, by replacing 
$\zeta : X \times X \to [1,+\infty)$ with a function $\zeta : X \times X \times X \to [1,+\infty),$ 
then a different form of extended $b$-metric arises.  
This three–point control version has been investigated in a work of 
Aydi et al.~\cite{Aydi}, where the extended inequality involves 
$\zeta(v,\omega,z)$ rather than $\zeta(v,z)$. In Particular, if $\zeta(v,\omega,z)=\zeta(v,z)$ for all $v,\omega,z\in X$, then the 
three-point form reduces to the usual extended $b$-metric.
\end{remark}
\vspace{0.2cm}
\begin{example}
Let $X=\{1,2,3\}$. Define the mappings 
$\zeta : X\times X \to \mathbb{R}^{+}$ and 
$d_{\zeta} : X\times X \to \mathbb{R}^{+}$ as follows. 
For all $v,\omega \in X$, set
\[
\zeta(v,\omega)=1+v+\omega.
\]
where
\[
d_{\zeta}(1,1)=d_{\zeta}(2,2)=d_{\zeta}(3,3)=0,
\]
\[
d_{\zeta}(1,2)=d_{\zeta}(2,1)=80,\qquad
d_{\zeta}(1,3)=d_{\zeta}(3,1)=1000,\qquad
d_{\zeta}(2,3)=d_{\zeta}(3,2)=600.
\]
Then, as proved in \cite{kamran}, the structure $(X, d_{\zeta})$ is an extended $b$-metric space.
\end{example}
\begin{example} \cite{Samreen} \label{ex:extended-bmetric}
Let $X=\{1,2,3,\ldots\}$. Define the control function 
\[
\zeta:X\times X\to[1,+\infty), \qquad 
\zeta(v,\omega)=
\begin{cases}
|v-\omega|^{3}, & v\neq \omega,\\[2mm]
1, & v=\omega,
\end{cases}
\]
and define the mapping $d_{\zeta}:X\times X\to[0,+\infty)$ by
\[
d_{\zeta}(v,\omega)=(v-\omega)^{4}.
\]
Then the pair $(X,d_{\zeta})$ forms an extended $b$-metric space with respect to the 
control function $\zeta$.
\end{example}

\vspace{0.2cm}
\begin{lemma}\cite{kamran}
Let $(X,d_{\zeta})$ be an extended $b$-metric space. If the mapping 
$d_{\zeta}:X\times X\to[0,+\infty)$ is continuous, then every convergent sequence 
in $X$ admits a unique limit.
\end{lemma}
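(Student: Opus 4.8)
The plan is to reduce the uniqueness of the limit to the identity-of-indiscernibles axiom $(d_{\zeta}1)$. Suppose a sequence $\{v_{n}\}$ in $X$ converges simultaneously to two points $u$ and $w$; in the natural sense this means $d_{\zeta}(v_{n},u)\to 0$ and $d_{\zeta}(v_{n},w)\to 0$ as $n\to\infty$. The objective is to conclude $u=w$, and by $(d_{\zeta}1)$ this is equivalent to showing $d_{\zeta}(u,w)=0$. First I would fix this target quantity $d_{\zeta}(u,w)$ and seek to sandwich it between $0$ and a sequence that vanishes.

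The central step exploits the continuity hypothesis. Since $v_{n}\to u$ in $X$, the argument pairs $(v_{n},w)$ converge to $(u,w)$ in $X\times X$, because the first coordinate converges to $u$ while the second is the constant sequence equal to $w$. Applying continuity of $d_{\zeta}$ at the point $(u,w)$ then gives $d_{\zeta}(v_{n},w)\to d_{\zeta}(u,w)$. On the other hand, the convergence $v_{n}\to w$ provides $d_{\zeta}(v_{n},w)\to 0$. Since a sequence of real numbers cannot possess two distinct limits, these two facts force $d_{\zeta}(u,w)=0$, and hence $u=w$ by $(d_{\zeta}1)$.

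The step I expect to be the main obstacle is the passage from convergence in $X$ to convergence of the argument pair in $X\times X$, which is precisely what licenses the use of continuity. One must verify that the notion of convergence employed in the extended $b$-metric space is compatible with the sequential continuity of $d_{\zeta}$, and confirm that a fixed second argument contributes only a trivially convergent constant sequence, so that indeed $(v_{n},w)\to(u,w)$. Everything downstream is then immediate.

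As a remark, continuity is in fact stronger than strictly needed: invoking axiom $(d_{\zeta}3)$ directly with the intermediate point $v_{n}$ yields $d_{\zeta}(u,w)\le \zeta(u,w)\bigl(d_{\zeta}(u,v_{n})+d_{\zeta}(v_{n},w)\bigr)$, and since $\zeta(u,w)$ is a fixed finite constant while both distances on the right-hand side tend to $0$, letting $n\to\infty$ already forces $d_{\zeta}(u,w)=0$. Thus uniqueness of limits persists regardless, but the continuity assumption supplies the cleanest route and matches the intended statement.
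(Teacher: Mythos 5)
Your main argument is correct and is essentially the proof from the cited source \cite{kamran}; the paper itself only quotes this lemma without reproducing a proof. The pair-convergence step you flag as the possible obstacle is harmless: continuity of $d_{\zeta}:X\times X\to[0,+\infty)$ is understood sequentially in both arguments, and the second coordinate is the constant sequence $w$, so $d_{\zeta}(v_{n},w)\to d_{\zeta}(u,w)$, while $v_{n}\to w$ gives $d_{\zeta}(v_{n},w)\to 0$; hence $d_{\zeta}(u,w)=0$ and $u=w$ by $(d_{\zeta}1)$.

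More interesting is your closing remark, which is also correct and genuinely sharper than the lemma as stated. In axiom $(d_{\zeta}3)$ the control function is evaluated at the two endpoints, not at the intermediate point, so taking $\omega=v_{n}$ yields $d_{\zeta}(u,w)\le\zeta(u,w)\bigl(d_{\zeta}(u,v_{n})+d_{\zeta}(v_{n},w)\bigr)$, where $\zeta(u,w)$ is a single finite constant independent of $n$; using symmetry $(d_{\zeta}2)$, both terms in the bracket tend to $0$, forcing $d_{\zeta}(u,w)=0$ with no continuity hypothesis at all. Thus uniqueness of limits holds in \emph{every} extended $b$-metric space, exactly as in every $b$-metric space, and the continuity assumption in the lemma is redundant for this particular conclusion. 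What continuity genuinely buys --- and why it is imposed in the paper's Proposition and in its Banach-type fixed point theorem --- is sequential continuity of the distance functional itself, which can fail in this setting (as the paper's example of a discontinuous $b$-metric shows); it is not needed to make limits unique.
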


\section{Main results}
In practical and theoretical settings, the evaluation of distances often carries 
unavoidable inaccuracies arising from various sources, such as measurement 
limitations or intrinsic fluctuations in the underlying system. Although these 
deviations may, at first sight, appear negligible, their cumulative effect can 
significantly modify the geometric behaviour of the space. To incorporate such 
perturbative phenomena into a rigorous analytic framework, we work within the 
structure of a \emph{perturbed extended $b$-metric} $(X,\mathcal{D}_{\zeta},\hbar)$,  
in which the observed distance $\mathcal{D}_{\zeta}$ differs from its exact 
counterpart by an explicit perturbation term governed by a control function 
$\zeta$. In this section, we introduce the core definitions, structural notions, 
and preliminary remarks that will support the fixed point theory developed in 
the subsequent results.

\vspace{0.2cm}
\begin{definition}\label{prensentdef}
Let $\mathcal{D}_{\zeta},\, \hbar : X \times X \to [0,+\infty)$ be two given mappings, 
and let 
\[
\zeta : X \times X \to [1,+\infty)
\]
be a control function. We say that $\mathcal{D}_{\zeta}$ is a 
\textit{perturbed extended $b$-metric} on $X$ with respect to $\hbar$ if 
\[
\mathcal{D}_{\zeta} - \hbar : X \times X \to \mathbb{R},
\qquad
(v,\omega) \longmapsto \mathcal{D}_{\zeta}(v,\omega) - \hbar(v,\omega),
\]
is an extended $b$-metric on $X$. That is, for all $v,\omega,z \in X$:
\begin{description}
    \item[$(\mathcal{D}_{\zeta}1):$] $(\mathcal{D}_{\zeta} - \hbar)(v,\omega) \ge 0$;
    \item[$(\mathcal{D}_{\zeta}2):$] $(\mathcal{D}_{\zeta} - \hbar)(v,\omega) = 0$ if and only if $v=\omega$;
    \item[$(\mathcal{D}_{\zeta}3):$] $(\mathcal{D}_{\zeta} - \hbar)(v,\omega)
           = (\mathcal{D}_{\zeta} - \hbar)(\omega,v)$;
    \item[$(\mathcal{D}_{\zeta}4):$] \(
    (\mathcal{D}_{\zeta} - \hbar)(v,z)
    \le 
    \zeta(v,z)\Big[
    (\mathcal{D}_{\zeta} - \hbar)(v,\omega)
    +
    (\mathcal{D}_{\zeta} - \hbar)(\omega,z)
    \Big].
    \)
\end{description}

We call $\hbar$ the \textit{perturbation mapping}, \(
d_{\zeta} = \mathcal{D}_{\zeta} - \hbar
\)
the \textit{exact extended $b$-metric}, 
and the triple $(X,\mathcal{D}_{\zeta},\hbar)$ a 
\textit{perturbed extended $b$-metric space}.
\end{definition}
\vspace{0.2cm}
\begin{remark}
\begin{description}\item[]
    \item[i)] If the above definition is modified by allowing the control function 
to depend on three variables, that is, by replacing 
$\zeta : X \times X \to [1,+\infty)$ with a mapping 
$\zeta : X \times X \times X \to [1,+\infty)$ in $(\mathcal{D}_{\zeta}4)$, 
then one obtains a new three-point variant of a perturbed extended $b$-metric.  
This yields a broader concept that generalizes the three-point extended 
$b$-metric introduced by Aydi et al.~\cite{Aydi}.

    \item[ii)] If, moreover, $\zeta(v,\omega,z)=\zeta(v,z)$ for all $v,\omega,z\in X$, 
then this new three-point version reduces to the present definition \ref{prensentdef}.
\end{description}

\end{remark}
\vspace{0.2cm}

\begin{remark}
In a perturbed extended $b$-metric space, the presence of the perturbation term 
$\hbar$ may cause $\mathcal{D}_{\zeta}$ to lose the usual extended $b$-metric 
behaviour and prevent $\mathcal{D}_{\zeta}$ from satisfying the standard 
extended $b$-metric axioms, such as the condition $\mathcal{D}_{\zeta}(v,v)=0$ 
or the extended $b$-triangle inequality. The next example confirms that a 
perturbed extended $b$-metric on $X$ is not necessarily an extended $b$-metric 
on $X$.
\end{remark}
\vspace{0.2cm}
\begin{example}
Consider the mapping \(
\mathcal{D}_{\zeta} : X \times X \longrightarrow [0,++\infty),
\qquad X = C([a,b],\mathbb{R}),
\)
defined by
\[
\mathcal{D}_{\zeta}(v,\omega)
=
\sup_{t \in [a,b]} |v(t) - \omega(t)|^{2}
+
\sup_{t \in [a,b]} \left( |v(t)|^{2} + |\omega(t)|^{2} \right),
\qquad \text{for all } v,\omega \in X.
\]

Then $\mathcal{D}_{\zeta}$ constitutes a perturbed extended $b$-metric on $X$ 
with respect to the perturbation 
\[
\hbar : X \times X \longrightarrow [0,++\infty),
\]
given by
\[
\hbar(v,\omega)
=
\sup_{t \in [a,b]}
\left( |v(t)|^{2} + |\omega(t)|^{2} \right),
\qquad \text{for all } v,\omega \in X.
\]

Accordingly, the corresponding exact extended $b$-metric \(
d_{\zeta} : X \times X \longrightarrow [0,++\infty)
\)
is expressed as
\[
d_{\zeta}(v,\omega)
=
\sup_{t \in [a,b]} |v(t) - \omega(t)|^{2},
\qquad \text{for all } v,\omega \in X.
\]

It is worth noting that $\mathcal{D}_{\zeta}$ is not an extended $b$-metric on $X$, since
\[
\mathcal{D}_{\zeta}(v,v)
=
\sup_{t \in [a,b]} \left( |v(t)|^{2} + |v(t)|^{2} \right)
\neq 0,
\qquad \text{for any } v \not\equiv 0.
\]
Thus $\mathcal{D}_{\zeta}(v,v) > 0$, showing that the perturbation $\hbar$ is essential.
\end{example}

\vspace{0.2cm}
\begin{remark}
Let $(X,\mathcal{D}_{\zeta},\hbar)$ be a perturbed extended $b$-metric space with 
control function $\zeta : X\times X \to [1,+\infty)$ and associated exact extended 
$b$-metric $d_{\zeta}=\mathcal{D}_{\zeta}-\hbar$.

If the control function is constant, that is,
\[
\zeta(v,\omega)=s\ge 1, \qquad \text{for all } v,\omega\in X,
\]
then the extended $b$-metric inequality
\[
d_{\zeta}(v,z)\le 
\zeta(v,z)\,\big(d_{\zeta}(v,\omega)+d_{\zeta}(\omega,z)\big)
\]
reduces to the classical $b$-metric condition
\[
d_{\zeta}(v,z)\le 
s\,[\,d_{\zeta}(v,\omega)+d_{\zeta}(\omega,z)\,].
\]
Hence, in this case, $d_{\zeta}$ becomes a $b$-metric on $X$ with coefficient $s$.

This observation motivates the introduction of a specific terminology for the 
constant-control case. When the control function reduces to a fixed parameter 
$s\ge 1$, the triple $(X,\mathcal{D}_{s},\hbar)$ becomes a precise specialization 
of the perturbed extended $b$-metric structure. We therefore refer to such a 
configuration as a \emph{perturbed $b$-metric space}. This framework emerges as a natural and structurally coherent restriction of the 
general perturbed extended $b$-metric concept developed in this work, and it 
naturally leads to the dedicated notion introduced in the following definition.
\end{remark}
\vspace{0.2cm}
\begin{definition}
Let $\mathcal{D}_{s},\, \hbar : X \times X \to [0,+\infty)$ be two given mappings, 
and let $s \ge 1$ be a fixed constant.  
We say that $\mathcal{D}_{s}$ is a \textit{perturbed $b$-metric} on $X$ with 
respect to $\hbar$ if 
\[
\mathcal{D}_{s} - \hbar : X \times X \to \mathbb{R},
\qquad
(v,\omega) \longmapsto \mathcal{D}_{s}(v,\omega) - \hbar(v,\omega),
\]
defines a $b$-metric on $X$ with coefficient $s$.  
That is, for all $v,\omega,z \in X$:

\begin{description}
    \item[$(\mathcal{D}_{s}1):$] $(\mathcal{D}_{s} - \hbar)(v,\omega) \ge 0$;
    \item[$(\mathcal{D}_{s}2):$] $(\mathcal{D}_{s} - \hbar)(v,\omega) = 0$ if and only if $v=\omega$;
    \item[$(\mathcal{D}_{s}3):$] $(\mathcal{D}_{s} - \hbar)(v,\omega)
           = (\mathcal{D}_{s} - \hbar)(\omega,v)$;
    \item[$(\mathcal{D}_{s}4):$] \(
    (\mathcal{D}_{s} - \hbar)(v,z)
    \le 
    s\Big[
      (\mathcal{D}_{s} - \hbar)(v,\omega)
      +
      (\mathcal{D}_{s} - \hbar)(\omega,z)
    \Big].
    \)
\end{description}

In this case, the expression $d_{s} = \mathcal{D}_{s} - \hbar$ is called the \textit{exact $b$-metric}, and the triple 
$(X,\mathcal{D}_{s},\hbar)$ is referred to as a 
\textit{perturbed $b$-metric space}.
\end{definition}
\vspace{0.2cm}
\begin{remark}
When $s=1$, the exact mapping $d_{1}=\mathcal{D}_{1}-\hbar$ is a usual metric, and $(X,\mathcal{D}_{1},\hbar)$ becomes a perturbed metric space. Hence, the notion above reduces to the classical concept of a perturbed metric space established in \cite{jleli}.
\end{remark}
\vspace{0.2cm}
\begin{remark}
Note that the perturbation $\hbar$ may cause $\mathcal{D}_{s}$ to fail the 
defining conditions of a $b$-metric. The next example demonstrates that a perturbed 
$b$-metric on $X$ is not necessarily a $b$-metric.
\end{remark}
\vspace{0.2cm}

\begin{example}[Perturbed $b$-metric on $\ell^{p}(\mathbb{R})$, $0<p<1$]
Consider the mapping 
\[
\mathcal{D}_{s}:\ell^{p}(\mathbb{R})\times \ell^{p}(\mathbb{R})\rightarrow [0,++\infty)
\]
defined by
\[
\mathcal{D}_{s}(v,\omega)
=
\left( \sum_{n=1}^{+\infty} |v_{n}-\omega_{n}|^{p} \right)^{1/p}
+
\sum_{n=1}^{+\infty} \big( |v_{n}|^{p} + |\omega_{n}|^{p} \big),
\qquad v,\omega \in \ell^{p}(\mathbb{R}),
\]
where 
\[
\ell^{p}(\mathbb{R})
=
\Big\{
v=(v_{n})_{n\ge 1} : \sum_{n=1}^{+\infty} |v_{n}|^{p} < +\infty
\Big\},
\qquad 0<p<1.
\]

The mapping $\mathcal{D}_{s}$ defines a perturbed $b$-metric on $\ell^{p}(\mathbb{R})$ 
with respect to the perturbation 
\[
\hbar(v,\omega)
=
\sum_{n=1}^{+\infty} \big( |v_{n}|^{p} + |\omega_{n}|^{p} \big),
\qquad 
v,\omega \in \ell^{p}(\mathbb{R}).
\]

Accordingly, the corresponding exact $b$-metric 
$d_{s}:\ell^{p}(\mathbb{R})\times \ell^{p}(\mathbb{R})\rightarrow [0,++\infty)$ 
is given by
\[
d_{s}(v,\omega)
=
\mathcal{D}_{s}(v,\omega) - \hbar(v,\omega)
=
\left( \sum_{n=1}^{+\infty} |v_{n}-\omega_{n}|^{p} \right)^{1/p},
\qquad v,\omega \in \ell^{p}(\mathbb{R}).
\]
It is known that $d_{s}$ is a $b$-metric on $\ell^{p}(\mathbb{R})$ with coefficient 
$s = 2^{1/p}$.

Furthermore, $\mathcal{D}_{s}$ itself is not a $b$-metric on $\ell^{p}(\mathbb{R})$, since
\[
\mathcal{D}_{s}(v,v)
=
d_{s}(v,v) + \hbar(v,v)
=
\sum_{n=1}^{+\infty} 2|v_{n}|^{p} > 0
\quad\text{for any nonzero } v\in \ell^{p}(\mathbb{R}).
\]
\end{example}

\begin{proposition}
Let $(X,\mathcal{D}_{\zeta},\hbar)$ be a perturbed extended $b$-metric space, and
let $d_{\zeta} = \mathcal{D}_{\zeta} - \hbar$
be the associated exact extended $b$-metric. Suppose that the mappings
\[
\mathcal{D}_{\zeta},\, \hbar : X \times X \to [0,+\infty)
\]
are continuous. Then $d_{\zeta}$ is a continuous extended $b$-metric on $X$. 
In particular, since a continuous extended $b$-metric ensures uniqueness of
the limit of any convergent sequence (see Lemma~1. \cite{kamran}), every convergent sequence 
in $(X,d_{\zeta})$ has a unique limit.
\end{proposition}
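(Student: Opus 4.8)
The plan is to split the statement into two parts: first that $d_{\zeta}$ is an extended $b$-metric, and second that it is continuous, after which the uniqueness of limits follows immediately from the cited lemma. The first part requires essentially no work, since the very definition of a perturbed extended $b$-metric space stipulates that $d_{\zeta} = \mathcal{D}_{\zeta} - \hbar$ satisfies the conditions $(\mathcal{D}_{\zeta}1)$--$(\mathcal{D}_{\zeta}4)$, which are precisely the axioms making $d_{\zeta}$ an extended $b$-metric in the sense of Kamran et al.~\cite{kamran}. I would simply record this observation and move on to the continuity claim, where the real content lies.

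First I would fix the meaning of continuity in this setting: on an extended $b$-metric space the topology is the one generated by $d_{\zeta}$, so a real-valued map $f : X \times X \to [0,+\infty)$ is continuous exactly when, for any sequences $(v_{n})$ and $(\omega_{n})$ in $X$ with $d_{\zeta}(v_{n},v)\to 0$ and $d_{\zeta}(\omega_{n},\omega)\to 0$, one has $f(v_{n},\omega_{n})\to f(v,\omega)$. With this sequential formulation in hand, the argument reduces to the algebra of limits. Given such sequences, the assumed continuity of $\mathcal{D}_{\zeta}$ and $\hbar$ yields $\mathcal{D}_{\zeta}(v_{n},\omega_{n})\to \mathcal{D}_{\zeta}(v,\omega)$ and $\hbar(v_{n},\omega_{n})\to \hbar(v,\omega)$; subtracting these two convergent sequences of real numbers gives $d_{\zeta}(v_{n},\omega_{n})=\mathcal{D}_{\zeta}(v_{n},\omega_{n})-\hbar(v_{n},\omega_{n})\to \mathcal{D}_{\zeta}(v,\omega)-\hbar(v,\omega)=d_{\zeta}(v,\omega)$, which is exactly the continuity of $d_{\zeta}$.

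Having established that $d_{\zeta}$ is a continuous extended $b$-metric, I would then invoke the cited lemma directly: a continuous extended $b$-metric forces every convergent sequence to possess a unique limit. This delivers the concluding assertion with no further computation.

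The one genuinely delicate point, which I would flag explicitly, concerns the topology. Since $\mathcal{D}_{\zeta}$ and $\hbar$ are assumed continuous with respect to the topology induced by $d_{\zeta}$ itself, one must verify that no circularity creeps in: the hypothesis speaks of convergence measured by the exact metric $d_{\zeta}$, while the conclusion asserts that $d_{\zeta}$ is continuous for that same mode of convergence. Making the sequential definition of continuity explicit at the outset resolves this cleanly, because then every limit appearing in the proof is an ordinary limit of real numbers, and the principle that a difference of continuous functions is continuous applies without obstruction.
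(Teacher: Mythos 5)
Your proof is correct and follows exactly the route the paper intends: the paper states this proposition without any written proof, treating it as immediate from the definition of a perturbed extended $b$-metric space (which already makes $d_{\zeta}=\mathcal{D}_{\zeta}-\hbar$ an extended $b$-metric), the fact that a difference of continuous real-valued mappings is continuous, and the citation of Kamran's lemma for uniqueness of limits. Your explicit sequential formulation of continuity, and the remark dispelling the apparent circularity in measuring convergence by $d_{\zeta}$ itself, are careful additions rather than deviations.
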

\vspace{0.3cm}

We now initiate some topological notions in perturbed extended b-metric spaces with the parallel concepts in perturbed b-metric spaces taken implicitly.
\vspace{0.3cm}
\begin{definition}
Let $(X,\mathcal{D}_{\zeta},\hbar)$ be a perturbed extended $b$-metric space, and let 
$(X,d_{\zeta})$ denote the associated exact extended $b$-metric space given by \(
d_{\zeta}(x,y)=\mathcal{D}_{\zeta}(x,y)-\hbar(x,y).
\) Let $\{v_n\}_{n\in\mathbb{N}}$ be a sequence in $X$, and let $T:X\to X$ be a mapping.
\begin{enumerate}[(i)]
\item We say that $\{v_n\}$ is \emph{perturbed convergent in} $(X,\mathcal{D}_{\zeta},\hbar)$ 
if there exists $v\in X$ such that \(d_{\zeta}(v_n,v)\longrightarrow 0 \quad \text{as } n\to+\infty,
\) that is, $\{v_n\}$ converges to $v$ in the exact extended $b$-metric space $(X,d_{\zeta})$.
\item The sequence $\{v_n\}$ is called a \emph{perturbed Cauchy sequence in} 
$(X,\mathcal{D}_{\zeta},\hbar)$ if \(
d_{\zeta}(v_n,v_m)\longrightarrow 0 \quad \text{as } n,m\to+\infty,
\) i.e., $\{v_n\}$ is Cauchy in the exact extended b-metric space $(X,d_{\zeta})$.

\item The space $(X,\mathcal{D}_{\zeta},\hbar)$ is said to be 
\emph{complete in the perturbed extended $b$-metric sense} whenever the exact space 
$(X,d_{\zeta})$ is complete, or equivalently, whenever every perturbed Cauchy sequence 
in $(X,\mathcal{D}_{\zeta},\hbar)$ is perturbed convergent.

\item A mapping $T:X\to X$ is called \emph{perturbed continuous} if 
whenever $v_n \to v$ in the exact space $(X,d_{\zeta})$, one also has \(
d_{\zeta}(Tv_n, Tv)\longrightarrow 0,
\) that is, $T$ preserves perturbed convergence in $(X,\mathcal{D}_{\zeta},\hbar)$.
\end{enumerate}
\end{definition}

We next establish a Banach-type fixed point theorem formulated in the newly introduced setting of perturbed extended $b$-metric spaces.

\vspace{0.3cm}

\begin{theorem}\label{3.1} 
Let $(X,\mathcal{D}_{\zeta},\hbar)$ be a complete perturbed extended $b$-metric space. Assume that the associated exact extended b-metric is a continuous function. Let $T : X \to X$ be a mapping satisfying
\begin{equation}
\mathcal{D}_{\zeta}(T v,T \omega) 
\le c.\, \mathcal{D}_{\zeta}(v,\omega),
\qquad \text{for all } v,\omega \in X,
\end{equation}
for some constant $c \in [0,1)$. 

Moreover, suppose that for each $v_{0} \in X$, the sequence 
$v_{n} = T^{n}v_{0}$, $n = 1,2,\dots$, satisfies
\[
\lim_{n,m \to +\infty} \zeta(v_{n},v_{m}) < \frac{1}{c}.
\]
Then $T$ has a unique fixed point $\vartheta \in X$. Furthermore, for every 
$v \in X$, the sequence $(T^{n}v)$ converges to $\vartheta$ with respect to $\mathcal{D}_{\zeta}$.
\end{theorem}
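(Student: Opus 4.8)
The plan is to reduce everything to the exact extended $b$-metric $d_\zeta=\mathcal{D}_\zeta-\hbar$, where the topological notions (perturbed Cauchy, perturbed convergence, completeness) live, while using the contraction hypothesis (stated on the observed distance $\mathcal{D}_\zeta$) to drive the quantitative estimates. Fix $v_0\in X$ and write $v_n=T^nv_0$. First I would iterate the contraction to obtain the geometric decay $\mathcal{D}_\zeta(v_n,v_{n+1})\le c^n\mathcal{D}_\zeta(v_0,v_1)$, and then, since $\hbar\ge 0$ forces $d_\zeta\le\mathcal{D}_\zeta$, transfer this to $d_\zeta(v_n,v_{n+1})\le c^n\mathcal{D}_\zeta(v_0,v_1)=:c^nM$. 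This step is the key link: it is exactly the inequality $d_\zeta\le\mathcal{D}_\zeta$ that allows a contraction written in the \emph{observed} distance to control the \emph{exact} one.

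Next I would run the standard extended $b$-metric Cauchy argument on $\{v_n\}$. For $m>n$, repeated use of $(\mathcal{D}_\zeta 4)$ (the extended $b$-triangle inequality satisfied by $d_\zeta$) gives
\[
d_\zeta(v_n,v_m)\le M\sum_{k=n}^{m-1}\Big(\prod_{j=n}^{k}\zeta(v_j,v_m)\Big)c^k,
\]
and the hypothesis $\lim_{n,m\to\infty}\zeta(v_n,v_m)<1/c$ makes the ratio of successive terms eventually strictly less than $1$, so by the ratio test the series converges and its tail vanishes as $n\to\infty$. Hence $\{v_n\}$ is perturbed Cauchy, i.e.\ Cauchy in $(X,d_\zeta)$, and completeness in the perturbed sense yields $\vartheta\in X$ with $d_\zeta(v_n,\vartheta)\to 0$.

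The crux is showing $T\vartheta=\vartheta$, and this is where the perturbation interferes. The natural estimate $d_\zeta(v_{n+1},T\vartheta)\le\mathcal{D}_\zeta(Tv_n,T\vartheta)\le c\,\mathcal{D}_\zeta(v_n,\vartheta)=c\big(d_\zeta(v_n,\vartheta)+\hbar(v_n,\vartheta)\big)$ leaves the term $\hbar(v_n,\vartheta)$, which $d_\zeta$-convergence alone does not control; I expect this to be the main obstacle. To close the gap I would first note that setting $v=\omega$ in the contraction gives $\hbar(Tv,Tv)\le c\,\hbar(v,v)$, because $\mathcal{D}_\zeta(x,x)=\hbar(x,x)$ (since $d_\zeta(x,x)=0$); iterating yields $\hbar(v_n,v_n)\le c^n\hbar(v_0,v_0)\to 0$. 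Invoking continuity of $d_\zeta$ together with continuity of the perturbation $\hbar$ (equivalently of $\mathcal{D}_\zeta$, as in the preceding Proposition) then gives $\hbar(\vartheta,\vartheta)=\lim_n\hbar(v_n,v_n)=0$ and $\hbar(v_n,\vartheta)\to\hbar(\vartheta,\vartheta)=0$, so that $\mathcal{D}_\zeta(v_n,\vartheta)\to 0$. Consequently $d_\zeta(v_{n+1},T\vartheta)\to 0$, that is $v_{n+1}\to T\vartheta$; since also $v_{n+1}\to\vartheta$, uniqueness of limits for a continuous extended $b$-metric (Lemma of \cite{kamran}) forces $T\vartheta=\vartheta$, and $\mathcal{D}_\zeta(v_n,\vartheta)\to 0$ is precisely the asserted convergence with respect to $\mathcal{D}_\zeta$.

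Finally, uniqueness is immediate: if $\vartheta,\vartheta'$ are both fixed points, then $\mathcal{D}_\zeta(\vartheta,\vartheta')=\mathcal{D}_\zeta(T\vartheta,T\vartheta')\le c\,\mathcal{D}_\zeta(\vartheta,\vartheta')$ with $c<1$ forces $\mathcal{D}_\zeta(\vartheta,\vartheta')=0$, whence $d_\zeta(\vartheta,\vartheta')=0$ because $\hbar\ge 0$, and then $(\mathcal{D}_\zeta 2)$ gives $\vartheta=\vartheta'$. The essential difficulty throughout is that the contractive control lives on $\mathcal{D}_\zeta$ while the geometry lives on $d_\zeta$; the decay $\hbar(v_n,v_n)\to 0$ extracted from the contraction, combined with continuity of $\hbar$, is what reconciles the two and makes the fixed-point step go through.
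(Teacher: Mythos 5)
Your argument follows the paper's proof essentially step for step through the Cauchy stage: the geometric decay $\mathcal{D}_{\zeta}(v_n,v_{n+1})\le c^{n}\mathcal{D}_{\zeta}(v_0,v_1)$, the transfer to $d_{\zeta}$ via $\hbar\ge 0$, the iterated extended $b$-triangle inequality combined with the ratio test under $\lim_{n,m\to+\infty}\zeta(v_n,v_m)<1/c$, completeness in the perturbed sense, and the uniqueness argument are exactly the paper's steps and are correct. The gap is in the fixed-point step: to obtain $\hbar(v_n,\vartheta)\to\hbar(\vartheta,\vartheta)=0$ you invoke continuity of $\hbar$ (equivalently of $\mathcal{D}_{\zeta}$), but that is \emph{not} a hypothesis of the theorem --- only continuity of the exact extended $b$-metric $d_{\zeta}$ is assumed, and nothing in $(\mathcal{D}_{\zeta}1)$--$(\mathcal{D}_{\zeta}4)$ constrains the behaviour of $\hbar$ off the diagonal. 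Your diagonal estimate $\hbar(v_n,v_n)\le c^{n}\hbar(v_0,v_0)$ (from $\mathcal{D}_{\zeta}(v,v)=\hbar(v,v)$ and the contraction) is correct and genuinely clever, but it controls $\hbar$ only on the diagonal of the orbit; without continuity of $\hbar$ there is no way to pass from $\hbar(v_n,v_n)\to 0$ and $d_{\zeta}(v_n,\vartheta)\to 0$ to $\hbar(v_n,\vartheta)\to 0$. As written, your proof therefore establishes a variant of the theorem with the added hypothesis that $\hbar$ is continuous, not the theorem as stated.

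That said, you have put your finger on precisely the soft spot of this result. The paper closes the same step by invoking ``the perturbed continuity of $T$,'' i.e.\ $d_{\zeta}(v_n,\vartheta)\to 0\Rightarrow d_{\zeta}(Tv_n,T\vartheta)\to 0$; but perturbed continuity of $T$ is not among the hypotheses either, and it does not follow from the $\mathcal{D}_{\zeta}$-contraction, for exactly the reason you identify: the contraction only yields $d_{\zeta}(Tv_n,T\vartheta)\le c\,d_{\zeta}(v_n,\vartheta)+c\,\hbar(v_n,\vartheta)$, with the term $\hbar(v_n,\vartheta)$ uncontrolled. (In the unperturbed case $\hbar\equiv 0$ of the corollary, both arguments are sound.) So your attempt and the paper's proof stall at the same point; yours is the more honest of the two, since it makes the needed extra assumption explicit, and under that assumption you even obtain the stronger conclusion $\mathcal{D}_{\zeta}(T^{n}v_0,\vartheta)\to 0$, which is what the statement literally asserts. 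Measured against the theorem exactly as stated, however, the appeal to continuity of $\hbar$ is a genuine gap.
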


\begin{proof}
Let $v_{0}\in X$ be arbitrary and construct the iterative sequence 
$\{v_{n}\}$ by
\[
v_{0}, \qquad v_{1}=T v_{0}, \qquad v_{2}=T v_{1}=T^{2}v_{0},\quad \dots,\quad 
v_{n}=T^{n}v_{0},\ \dots
\]

From the contractive condition
\[
\mathcal{D}_{\zeta}(T v,T \omega) \le c\,\mathcal{D}_{\zeta}(v,\omega)
\qquad \text{for all } v,\omega\in X,
\]
we deduce, for $n=0$,
\[
\mathcal{D}_{\zeta}(v_{1},v_{2})
= \mathcal{D}_{\zeta}(T v_{0},T v_{1})
\le c\,\mathcal{D}_{\zeta}(v_{0},v_{1}).
\]

Assume now that for some $n\ge0$ we have
\[
\mathcal{D}_{\zeta}(v_{n},v_{n+1})
\le c^{n}\,\mathcal{D}_{\zeta}(v_{0},v_{1}).
\]
Using the contractive condition with $v=v_{n}$ and $\omega=v_{n+1}$, we obtain
\[
\mathcal{D}_{\zeta}(v_{n+1},v_{n+2})
= \mathcal{D}_{\zeta}(T v_{n},T v_{n+1})
\le c\, \mathcal{D}_{\zeta}(v_{n},v_{n+1})
\le c^{\,n+1}\,\mathcal{D}_{\zeta}(v_{0},v_{1}).
\]

Thus, by induction on $n$, we conclude that
\begin{equation}\label{Dz-iter}
\mathcal{D}_{\zeta}(v_{n},v_{n+1})
\le c^{\,n}.\Im ~~\text{ for all } n\in\mathbb{N},
\end{equation}
where $\Im=\mathcal{D}_{\zeta}(v_{0},v_{1}).$

Let \(
d_{\zeta}(u,w)=\mathcal{D}_{\zeta}(u,w)-\hbar(u,w)
\)
be the exact extended $b$-metric associated with $(X,\mathcal{D}_{\zeta},\hbar)$.  By the definition of $d_{\zeta}$, we may rewrite \eqref{Dz-iter} as
\[
d_{\zeta}(v_n,v_{n+1}) + \hbar(v_n,v_{n+1})
= \mathcal{D}_{\zeta}(v_n,v_{n+1})
\le c^{\,n}\,\mathcal{D}_{\zeta}(v_0,v_1), \qquad n\in\mathbb{N}.
\]

Since
\[
d_{\zeta}(v_n,v_{n+1})
\le d_{\zeta}(v_n,v_{n+1})+\hbar(v_n,v_{n+1})
= \mathcal{D}_{\zeta}(v_n,v_{n+1}),
\]
we deduce from \eqref{Dz-iter} that
\begin{equation}\label{dz-final}
d_{\zeta}(v_n,v_{n+1})
\le c^{\,n}.\Im,
\qquad n\in\mathbb{N}.
\end{equation}
Using successively the extended $b$-metric inequality, for $m>n$ we have
\[
\begin{aligned}
d_{\zeta}(v_n , v_m)
&\le \zeta(v_n , v_m)\, d_{\zeta}(v_n , v_{n+1})
   + \zeta(v_n , v_m)\zeta(v_{n+1},v_m)\, d_{\zeta}(v_{n+1},v_{n+2})  \\
&\quad +\; \zeta(v_n , v_m)\zeta(v_{n+1},v_m)\zeta(v_{n+2},v_m)\,
          d_{\zeta}(v_{n+2},v_{n+3}) + \cdots \\
&\quad +\; \zeta(v_n , v_m)\zeta(v_{n+1},v_m)\cdots \zeta(v_{m-1},v_m)\,
          d_{\zeta}(v_{m-1},v_m).
\end{aligned}
\]

From the estimate
\[
d_{\zeta}(v_j , v_{j+1}) \le c^{\,j}\,\Im, \qquad j\in\mathbb{N},
\]
we deduce
\[
\begin{aligned}
d_{\zeta}(v_n , v_m)
&\le \Im \Big[
c^{n}\prod_{i=1}^{n}\zeta(v_i,v_m)
+ c^{\,n+1}\prod_{i=1}^{n+1}\zeta(v_i,v_m)
+ \cdots  \\
&\qquad\qquad 
+ c^{\,m-1}\prod_{i=1}^{m-1}\zeta(v_i,v_m)
\Big].
\end{aligned}
\]

Since 
\[
\lim_{n,m\to+\infty} \zeta(v_{n+1},v_m)\,c < 1,
\]
the series
\[
\sum_{j=1}^{+\infty} c^{\,j}\prod_{i=1}^{j}\zeta(v_i,v_m)
\]
converges by the ratio test for each fixed $m\in\mathbb{N}$.  
Define
\[
\daleth=\sum_{j=1}^{+\infty} c^{\,j}\prod_{i=1}^{j}\zeta(v_i,v_m),
\qquad
\daleth_n=\sum_{j=1}^{n} c^{\,j}\prod_{i=1}^{j}\zeta(v_i,v_m).
\]

Thus, for $m>n$, the above inequality gives
\[
d_{\zeta}(v_n , v_m)
\le \Im\,\big(\daleth_{m-1}-\daleth_n\big).
\]

Letting $n\to+\infty$, we conclude that 
\[
d_{\zeta}(v_n,v_m)\longrightarrow 0,
\]
hence the sequence $\{v_n\}$ is Cauchy in the exact extended $b$-metric space $(X,d_{\zeta})$. In particular, by definition of the perturbed structure, this means that $\{v_n\}$ is a perturbed Cauchy sequence in $(X,\mathcal{D}_{\zeta},\hbar)$. Since $(X,\mathcal{D}_{\zeta},\hbar)$ is complete in the perturbed sense, and 
$\{v_n\}$ has been shown to be Cauchy in $(X,d_{\zeta})$, there exists a point 
$\vartheta \in X$ such that
\begin{equation}\label{limit-v}
\lim_{n \to +\infty} d_{\zeta}(v_n,\vartheta)=0.
\end{equation}

\medskip
We now verify that $\vartheta$ is a fixed point of $T$.  
Using the perturbed continuity of $T$ with respect to the exact extended $b$-metric $d_{\zeta}$, 
the convergence in \eqref{limit-v} implies
\[
\lim_{n\to+\infty} d_{\zeta}(T v_n, T\vartheta)=0.
\]
Since $T v_n = v_{n+1}$, we obtain
\begin{equation}\label{limit-T}
\lim_{n\to+\infty} d_{\zeta}(v_{n+1}, T\vartheta)=0.
\end{equation}
Because $d_{\zeta}$ is a genuine extended $b$-metric on $X$, limits are unique; 
comparing \eqref{limit-v} and \eqref{limit-T} yields
\[
T\vartheta = \vartheta.
\]
Thus $\vartheta$ is indeed a fixed point of $T$.

\medskip
We next show that this fixed point is the only one.  
Assume, to the contrary, that $p,q\in X$ are two distinct fixed points of $T$.  
Applying the contractive condition to $(p,q)$ gives
\[
\mathcal{D}_{\zeta}(p,q)
= \mathcal{D}_{\zeta}(Tp,Tq)
\le c\, \mathcal{D}_{\zeta}(p,q).
\]
Writing $\mathcal{D}_{\zeta}=d_{\zeta}+\hbar$, we get
\[
d_{\zeta}(p,q)+\hbar(p,q)
\le c\,\big( d_{\zeta}(p,q)+\hbar(p,q) \big).
\]
Since $p\neq q$, the quantity $d_{\zeta}(p,q)+\hbar(p,q)$ is strictly positive, and 
the above inequality forces $c\ge 1$, contradicting the assumption $0\le c<1$.
Hence, the fixed point $\vartheta$ is unique. This completes the proof of the theorem.
\end{proof}

\begin{corollary} \cite{kamran}
Let $(X,d_{\zeta})$ be a complete extended $b$-metric space such that 
$d_{\zeta}$ is a continuous functional. 
Let $T : X \to X$ satisfy
\begin{equation}\label{eq:contr-ext-b}
d_{\zeta}(Tv, T\omega) \le c\, d_{\zeta}(v, \omega)
\quad \text{for all } v, \omega \in X,
\end{equation}
where $c \in [0,1)$ is such that, for each $v_{0} \in X$,
\[
\lim_{n,m \to +\infty} \zeta(v_n, v_m) < \frac{1}{c},
\]
here $v_n = T^{n} v_0$, $n = 1,2,\dots$.

Then $T$ has precisely one fixed point $v^{\ast} \in X$. Moreover, for each 
$\omega \in X$, we have $T^{n}\omega \to v^{\ast}$ as $n \to +\infty$.
\end{corollary}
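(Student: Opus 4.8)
The plan is to obtain this corollary as an immediate specialization of Theorem~\ref{3.1}, rather than to repeat the fixed point argument from scratch. The key observation is that every extended $b$-metric space $(X,d_{\zeta})$ can be regarded as a perturbed extended $b$-metric space by choosing the trivial perturbation $\hbar\equiv 0$. Indeed, with $\hbar\equiv 0$ the difference $\mathcal{D}_{\zeta}-\hbar$ coincides with $\mathcal{D}_{\zeta}=d_{\zeta}$, so the axioms $(\mathcal{D}_{\zeta}1)$--$(\mathcal{D}_{\zeta}4)$ reduce exactly to $(d_{\zeta}1)$--$(d_{\zeta}3)$. Hence $(X,d_{\zeta},0)$ is a legitimate perturbed extended $b$-metric space whose associated exact extended $b$-metric is $d_{\zeta}$ itself.

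Next I would verify that each hypothesis of the corollary matches, under this identification, the corresponding hypothesis of Theorem~\ref{3.1}. Completeness of $(X,d_{\zeta})$ in the ordinary sense is precisely completeness of $(X,\mathcal{D}_{\zeta},0)$ in the perturbed sense, since both notions are defined through the same exact metric $d_{\zeta}$. Continuity of the functional $d_{\zeta}$ is exactly the continuity of the associated exact extended $b$-metric demanded by the theorem, and it also supplies perturbed continuity of $T$: the contraction \eqref{eq:contr-ext-b} gives $d_{\zeta}(Tv_{n},Tv)\le c\,d_{\zeta}(v_{n},v)\to 0$ whenever $d_{\zeta}(v_{n},v)\to 0$. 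Finally, once $\mathcal{D}_{\zeta}=d_{\zeta}$, the contractive inequality \eqref{eq:contr-ext-b} is literally the contractive hypothesis of Theorem~\ref{3.1}, and the growth restriction $\lim_{n,m\to+\infty}\zeta(v_{n},v_{m})<\tfrac{1}{c}$ is identical in both statements.

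With all hypotheses verified, Theorem~\ref{3.1} delivers a unique fixed point $v^{\ast}\in X$ together with the convergence $T^{n}\omega\to v^{\ast}$ with respect to $\mathcal{D}_{\zeta}=d_{\zeta}$, which is exactly the assertion of the corollary. I expect no genuine obstacle here; the only point requiring care is the bookkeeping that confirms the perturbed notions of completeness and continuity collapse to their classical counterparts when $\hbar\equiv 0$, so that the invocation of Theorem~\ref{3.1} is fully justified. Should a self-contained argument be preferred, the same conclusion follows by transcribing the proof of Theorem~\ref{3.1} with $\hbar=0$: the Picard estimate $d_{\zeta}(v_{n},v_{n+1})\le c^{n}d_{\zeta}(v_{0},v_{1})$, the telescoping bound obtained from the extended $b$-triangle inequality, convergence of the resulting series by the ratio test under the $\zeta$-growth condition, and uniqueness from $d_{\zeta}(p,q)\le c\,d_{\zeta}(p,q)$ forcing $p=q$.
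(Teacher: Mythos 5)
Your proposal is correct and follows essentially the same route as the paper: both take the trivial perturbation $\hbar\equiv 0$ so that $\mathcal{D}_{\zeta}=d_{\zeta}$, note that the contraction \eqref{eq:contr-ext-b} yields the perturbed continuity of $T$, and then invoke Theorem~\ref{3.1}. Your write-up is in fact somewhat more careful than the paper's, since it explicitly checks that completeness and continuity in the perturbed sense collapse to the classical notions when $\hbar\equiv 0$.
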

\begin{proof}
Define 
$\hbar(v,\omega) = 0$ for all $v,\omega \in X$ and let $\mathcal{D}_{\zeta} = d_{\zeta}$. Then 
$(X,\mathcal{D}_{\zeta},\hbar)$ is a perturbed extended $b$-metric space. 
Furthermore, by \eqref{eq:contr-ext-b}, $T$ is continuous with respect to the 
exact extended $b$-metric, and the contractive condition 
of Theorem \ref{3.1} is satisfied. Hence Theorem \ref{3.1} applies, and the conclusion follows.
\end{proof}
\section{Discussion: Perturbative Extensions to $S_b$-Metric Spaces}
Finally, we observe that the perturbation methodology developed in this work 
naturally extends to other multi-point distance structures. In particular, the 
same decomposition technique allows the introduction of a \emph{perturbed 
extended $S_{b}$-metric space}. Recall that, following Mlaiki \cite{Mlaiki} and 
Souayah et al.\ \cite{Souayah}, a mapping $S_{\zeta}:X^{3}\to[0,+\infty)$ is 
said to define an extended $S_{b}$-metric space $(X,S_{\zeta})$ if there exists 
a control function $\zeta:X^{3}\to[1,+\infty)$ such that, for all 
$v,\omega,z,t\in X$,
\[
\begin{aligned}
&\text{(i)}\quad S_{\zeta}(v,\omega,z)=0 \iff v=\omega=z,\\[1mm]
&\text{(ii)}\quad 
S_{\zeta}(v,\omega,z)
   \le 
\zeta(v,\omega,z)\,\big(
    S_{\zeta}(v,v,t)
  + S_{\zeta}(\omega,\omega,t)
  + S_{\zeta}(z,z,t)
\big).
\end{aligned}
\]

When the control function is constant, i.e.\ $\zeta(v,\omega,z)=s\ge 1$, one 
obtains the $S_{b}$-metric introduced in \cite{Mlaiki,Souayah}. Moreover, taking 
$s=1$ yields the classical $S$-metric of Sedghi et al.\ \cite{Sedghi}.

Motivated by the philosophy developed in the present paper, we initiate 
the formulation of a perturbed extended $S_{b}$-metric space within this 
perturbative framework. 

\begin{definition}
Let $\mathcal{S},\hbar_{S}:X^{3}\to[0,+\infty)$ be two mappings and let 
$\zeta:X^{3}\to[1,+\infty)$ be a control function.  
We say that $\mathcal{S}$ is a \emph{perturbed extended $S_{b}$-metric} on $X$
with respect to $\hbar_{S}$ if the mapping
\[
S_{b}(v,\omega,z)
    = \mathcal{S}(v,\omega,z) - \hbar_{S}(v,\omega,z)
\]
defines an extended $S_{b}$-metric on $X$ in the sense of \cite{Mlaiki}.
\end{definition}

In this setting, given a pair $(\mathcal{S},\hbar_{S})$, we define the 
exact component
\[
S_{b}(v,\omega,z)
    = \mathcal{S}(v,\omega,z)
    - \hbar_{S}(v,\omega,z),
\]
and require that $S_{b}$ satisfies the axioms above. When the control function 
$\zeta$ is constant, this leads naturally to a \emph{perturbed $S_{b}$-metric}, 
and the case $s=1$ yields a corresponding \emph{perturbed $S$-metric}.

This hierarchy parallels the structure developed here for perturbed extended 
$b$-metrics and opens a natural avenue for further study in multi-point 
perturbative geometries.

\section*{Conclusion}

We introduced perturbed extended $b$-metric spaces as a flexible framework in 
which distance measurements incorporate both a perturbation term and a control 
function. This structure accommodates deviations in the underlying geometry 
while preserving a setting suitable for fixed point analysis. A Banach-type 
contraction theorem was established, and the behavior of Picard iterations was 
clarified under the perturbative assumptions. The examples provided highlight 
how perturbations can meaningfully affect the geometry associated with the 
exact distance.

The perturbative viewpoint extends naturally beyond two-point distances. 
Multi-point models, including the $S$-metric, its extended forms, fit coherently into the same conceptual scheme. 
This suggests a broader landscape of perturbation-based distance theories.

Future work may explore additional contractive conditions, stability 
properties of iterative processes, and adaptations of the framework to other 
generalized metric structures.

\end{document}